\numberwithin{equation}{section}
\newtheorem{theorem}{Theorem}[section]
\newtheorem{lemma}[theorem]{Lemma}
\theoremstyle{definition}
\newtheorem*{definition}{Definition}
\newtheorem{remark}[theorem]{Remark}
\theoremstyle{remark}
\newenvironment{romenumerate}[1][0pt]{% optional argument changes indentation
\addtolength{\leftmargini}{#1}\begin{enumerate}% gives (i), (ii) etc.
 }{\end{enumerate}}
\newcounter{oldenumi}
\newcounter{thmenumerate}
\newcounter{romxenumerate}   %less indented than standard.
\newcounter{xenumerate}   %no left indentation; thus wider lines
\newcommand{\refT}[1]{Theorem~\ref{#1}}
\newcommand{\refL}[1]{Lemma~\ref{#1}}
\newcommand{\refS}[1]{Section~\ref{#1}}
\newcommand\marginal[1]{\marginpar{\raggedright\parindent=0pt\tiny #1}}
\xdef\klockan{\the\count1.0\the\count255}
\xdef\klockan{\the\count1.\the\count255}\fi
\newcommand\set[1]{\ensuremath{\{#1\}}}
\newcommand\xpar[1]{(#1)}
\newcommand\bigpar[1]{\bigl(#1\bigr)}
\newcommand\lrpar[1]{\left(#1\right)}
\newcommand\bigsqpar[1]{\bigl[#1\bigr]}
\newcommand\lrsqpar[1]{\left[#1\right]}
\def\rompar(#1){\textup(#1\textup)}    % usage: \rompar(...)
\def\xexp(#1){e^{#1}}
\newcommand\ntoo{\ensuremath{{n\to\infty}}}
\newcommand\eg{e.g.\spacefactor=1000}
\newcommand{\tend}{\longrightarrow}
\newcommand\dto{\overset{\mathrm{d}}{\tend}}
\newcounter{CC}
\newcounter{cc}
\newcommand{\cc}{\stepcounter{cc}\ccx} %new constant c_i
\newcommand{\ccx}{c_{\arabic{cc}}}     %repeats the last c_i
\newcommand\E{\operatorname{\mathbb E{}}}
\newcommand\Var{\operatorname{Var}}
\newcommand\ga{\alpha}
\newcommand\gb{\beta}
\newcommand\gG{\Gamma}
\newcommand\kk{\varkappa}
\newcommand\gs{\sigma}
\newcommand\gss{\sigma^2}
\newcommand\cA{\mathcal A}
\newcommand\cC{\mathcal C}
\def\[#1]{[\![#1]\!]}
\newcommand\setoi{\set{0,1}}
\newcommand\SW{W}
\newcommand\REM[1]{{\raggedright\texttt{[#1]}\par\marginal{XXX}}}
\newenvironment{comment}{\setbox0=\vbox\bgroup}{\egroup} %deletes!
\newcommand\urladdrx[1]{{\urladdr{\def~{\Tilde}#1}}}
\renewcommand\Tilde{{\tiny$\sim$}}
\begin{document}

%\title[On the Statistics of the Number of Fixed-Dimensional Subcubes in a Random Subset of the n-Dimensional Discrete Unit Cube]
\title[Statistics of the Number of Subcubes in Subsets of the n-Cube ]
{
On the Statistics of the Number of Fixed-Dimensional Subcubes in a Random Subset of the n-Dimensional Discrete Unit Cube
}
\thanks{Accompanied by Maple package SMCboole.txt, available from
\hfill\break
\href{http://www.math.rutgers.edu/~zeilberg/mamarim/mamarimhtml/subcubes.html}{\texttt{
http://www.math.rutgers.edu/\Tilde zeilberg/mamarim/mamarimhtml/subcubes.html}}.\\
}

\date{Feb. 17, 2023} % (typeset \today{} \klockan)} %; revised ...

\author{Svante Janson}
\address{Department of Mathematics, Uppsala University, PO Box 480,
SE-751~06 Uppsala, Sweden}
\email{svante.janson [At] math [Dot] uu [Dot] se}
\urladdrx{\href{http://www.math.uu.se/~svante/}{http://www.math.uu.se/~svante/}}

\author{Blair Seidler}
\address{ Department of Mathematics,
Rutgers University (New Brunswick), Piscataway NJ 08854, USA}
\email{blair [At] math [Dot] rutgers [Dot] edu}

\author{Doron ZEILBERGER}
\address{Department of Mathematics, Rutgers University
(New Brunswick), Piscataway, NJ 08854, USA}
\email{zeilberg [At] math [Dot] rutgers [Dot] edu}

%\keywords{<keywords>}
\subjclass[2000]{} 

\begin{abstract}  This paper consists of two independent, but related parts.
In the first part we show how to use symbolic computation to derive explicit expressions for the first few moments
of the number of implicants that a random Boolean function has, or equivalently
the number of fixed-dimensional subcubes contained in a random subset of the
$n$-dimensional cube. These explicit expressions suggest, but do not prove,
that these random variables are always asymptotically normal.

The second part presents a full, human-generated proof, of this asymptotic
normality, first proved by Urszula Konieczna.
\end{abstract}

%\dedicatory{}

\maketitle

\section{Introduction}\label{S:intro}

\subsection{Motivation}

Recall that an \emph{implicant} of a Boolean function in $n$ variables, $f(x_1, \dots, x_n)$ is a pure disjunction 
\begin{align}
x_{i_1}^{a_1} \,\wedge \,x_{i_2}^{a_{2}} \wedge \dots \wedge x_{i_r}^{a_r} ,
\end{align}
that implies it. 
Here $1 \leq i_1 < \dots i_r \leq n$, $a_1, \dots, a_r \in \{0,1\}$, $z^1=z$, and $z^0=\bar{z}$ (the negation of $z$).

Fix $r$ and let $n$ vary. We are interested in the statistical distribution of the random variable {\it number of implicants of length $n-r$} in a uniformly-at-random Boolean function
of $n$ variables. Clearly, when $r=0$ it is nothing but the good old (fair) binomial distribution with $2^n$ {\it fair coin-tosses}, $B(\frac{1}{2},2^n)$.

Equivalently, for a random subset of the $n$-dimensional cube, we are interested in the statistical distribution of the number of $r$-dimensional subcubes properly contained in it.

We would like to have \textbf{explicit expressions}, in $n$, for the $k^{th}$ moment of this random variable, for as many as possible $r$ and $k$. This turns out to be
a challenging {\it symbolic-computational} problem that we will address in the first part of this paper.

In the second part we consider asymptotics as \ntoo.
It was proved by  Urszula Konieczna \cite{UK} that 
for each fixed $r$, this distribution is {\it asymptotically normal}.
We will reprove this by a somewhat different method which leads to a simple
proof.

\subsection{Our Random Variables}

The \emph{sample space} is the set of subsets of $\{0,1\}^n$, that has cardinality $2^{2^n}$.

Let's define our random variables formally.
For a (uniformly-at-) random subset $S$, of $\{0,1\}^n$, and \emph{fixed} $r$, define the \textbf{random variable}
\begin{align}
X_r(S):=\text{ number of }r\text{-dimensional subcubes of }S.
\end{align}

For example, if $n=3$ and
\begin{align}
S=\{ 000,001,010,011,100,111\},
\end{align}
we have
\begin{align}
X_0(S)=6 , \quad
X_1(S)=6 , \quad
X_2(S)=1 , \quad
X_3(S)=0 .
\end{align}

We would like to get, for as many pairs $(k,r)$ as possible, {\bf explicit} expressions in $n$, for the $k$-th moment of $X_r$, i.e. for
\begin{align}
f_{kr}(n):=\E[{X_r}^k](n) .
\end{align}

\section{Explicit Expressions for Moments of the Number of Low-Dimensional Subcubes in a Random Subest of the $n$-dimensional cube}\label{S:Expl}

\subsection{The Expectation and Variance}

The {\it first} moment, aka  {\it expectation}, aka {\it mean}, aka {\it average}, is easy, using {\bf linearity of expectation}.

For any specific subcube $C$ of $\{0,1\}^n$, define the {\it atomic} random variable, $X_C$, on subsets, $S$, of $\{0,1\}^n$ as follows.
\begin{align}
X_C(S)\,= \,
\begin{cases}
1,&\text{if} \quad  C \subset S ; \\
0,& \text{otherwise}.
\end{cases}
\end{align}
Let $\cC(n,r)$ be the set of all ${\binom{n}{r}}2^{n-r}$ $r$-dimensional subcubes of $\{0,1\}^n$. Then, of course,
\begin{align}\label{xr}
X_r(S) \, = \, \sum_{C \in \cC(n,r)} X_C(S) .  
\end{align}
Applying the {\it expectation} functional and using the {\bf linearity of expectation}, we get that the average, let's call it 
$\mu_r(n)$, is
\begin{align} \label{LoA}
\mu_r(n) =
\E[X_r] \, = \, \E\lrsqpar{\sum_{C \in \cC(n,r)} X_C} \, = \, \sum_{C \in \cC(n,r)} \E[X_C] .
\end{align}

Now the probability that a random subset of $\{0,1\}^n$ contains an $r$-dimensional subcube $C$ is 
$\left(\frac{1}{2}\right)^{2^r}$, since for each of its vertices, the chance of it belonging to $S$ is $\frac{1}{2}$,
and by {\it independence} the probability that all its $2^r$ vertices belong to $S$ is indeed  $\frac{1}{2^{2^r}}$. The probability that $X_C(S)=0$ is of course   $1-\left(\frac{1}{2}\right)^{2^r}$, hence
\begin{align}\label{mu}  
\E[X_C]= 1 \cdot \left(\frac{1}{2}\right)^{2^r} \, + \, 0 \cdot \left(1-\left(\frac{1}{2}\right)^{2^r}\right) 
\, = \, \left(\frac{1}{2}\right)^{2^r} .
\end{align}
Going back to equation \eqref{LoA} we have
\begin{align}
\mu_r(n) \, = \, \sum_{C \in \cC(n,r)} \E[X_C] = \, \sum_{C \in \cC(n,r)} \frac{1}{2^{2^r}} = |\cC(n,r)| \cdot  \frac{1}{2^{2^r}} = \frac{{\binom{n}{r}}2^{n-r}}{2^{2^r}} .
\end{align}

In a beautiful paper, Thanatipanonda \cite{T} derived an explicit expression for the general second moment, for every $r$-dimensional cube.\\\\

{\bf Thanatipanoda's General Formula for the Second Moment}:
\begin{align}\label{th2}
\E[X_r^2] \, = \,
\sum_{i=0}^{r} \frac{n! 2^{n-i}}{i!(r-i)!^2 (n-2r+i)! 2^{2^{r+1}}} \cdot (2^{2^i} -1) \, 
+ \,\frac{[{\binom{n}{r}} 2^{n-r}]^2}{2^{2^{r+1}}} ,
\end{align}
from which immediately follows, using $[\E(X_r-\mu_r(n))^2]=\E[X_r^2]-\mu_r(n)^2$, the following formula.\\\\

{\bf Thanatipanoda's General Formula for the Variance}:
\begin{align}\label{thvar}
  \text{Var}(X_r)=
\sum_{i=0}^{r} \frac{n! 2^{n-i}}{i!(r-i)!^2 (n-2r+i)! 2^{2^{r+1}}} \cdot
  (2^{2^i} -1)  .
\end{align}
Note that the variance is a polynomial in $(n,2^n)$ of degree $2r$ in $n$ and degree $1$ in $2^n$.

\subsection{Higher Moments}

\subsubsection{Edges}

Thanatipanonda was unable to get such a general formula for higher moments, but did get $\E[X_1^3]$, from which he immediately deduced that
the third central moment (or third-moment-about-the-mean) of $X_1$ is
\begin{align}
\E[(X_1-\mu_1(n))^3] \, = \, \frac{3n^3 2^n}{64} .
\end{align}

Using the symbolic-computational algorithms to be described in the next section, we managed to derive the following explicit formulas
\begin{align}
\E[(X_1-\mu_1(n))^4] =\frac{n 2^{n}}{1024}\big(&12 n^{3} 2^{n} +12 n^{2} 2^{n}+40 n^{3} \notag\\
&+3 n 2^{n}-48 n^{2}+12 n -16\big),
\end{align}

\begin{align}
\E[(X_1-\mu_1(n))^5]  = \frac{5 n^{3} 2^{n} }{1024} \left(6 n^{2}2^{n} +3 n 2^{n}+4 n^{2}-24 n +8\right),
\end{align}

\begin{align}
\E[(X_1-&\mu_1(n))^6] = \notag\\
\frac{n 2^{n}}{32768}\cdot\big(&120 n^{5}\left(2^{n}\right)^{2} +180 n^{4} \left(2^{n}\right)^{2}
+1920 n^{5} 2^{n}+90 n^{3}\left(2^{n}\right)^{2} \notag\\
&-840 n^{4} 2^{n}-1792 n^{5}
+15 n^{2} \left(2^{n}\right)^{2}-360 n^{3} 2^{n} -5280 n^{4}\notag\\
&-300 n^{2} 2^{n}+3840 n^{3}
-240 n 2^{n}+3840 n^{2}-6720 n +4864 \big)
.\end{align}
It follows that the {\bf scaled moments about the mean} for the third, fourth, fifth, and sixth moments, converge, as $n \rightarrow \infty$, to $0,3,0,15$ respectively,  the  respective moments of the normal distribution,
indicating that the random variable $X_1$ (the number of edges contained in $S$)  is {\bf probably} asymptotically normal. To fully prove asymptotic normality, of course, we need to prove
it for all moments, not just for the first six.

\subsubsection{Squares}

We only managed to get explicit expressions for the third and fourth moments for $X_2$.
\begin{align}
\E[(X_2-\mu_2(n))^3] \, = \, 
\frac{2^{n} n \left(n -1\right)}{32768} \left(9 n^{4}+6 n^{3}+21 n^{2}-16 n -34\right),
\end{align}

\begin{align}
\E[(X_2-\mu_2(n))^4&] = \notag\\
\frac{2^{n} n \left(n -1\right)}{4194304} \cdot
\big(&12 n^{6} 2^{n}+12 n^{5} 2^{n} +
520 n^{6}+24 n^{4} 2^{n}-24 n^{5}-12  n^{3}2^{n}+1272 n^{4}
\notag\\
&-9 n^{2}2^{n} -840 n^{3}-27 n 2^{n}-5232 n^{2}-2768 n +240\big)
.\end{align}

\subsubsection{3-dimensional cubes}

We only managed to get an explicit expression for the third moment for $X_3$.
\begin{align}
\E[(X_3-\mu_3(n))^3] =\quad\quad& \notag\\
\frac{2^{n} n \left(n -1\right) \left(n -2\right)}{2415919104} \big(&14 n^{6}+24 n^{5}+479 n^{4}+2046 n^{3}\notag\\
&+6779 n^{2}+15444 n -23112\big)
.\end{align}

\subsection{Our Method}

Obviously we did not derive these formulas by hand. We had to teach our computer how to find them. It also uses {\it linearity of expectation}, but
with higher moments things get very complicated.
Recall that
\begin{align}
X_r(S) \, = \, \sum_{C \in \cC(n,r)} X_C(S) .
\end{align}
Hence, the $k$-th moment is
\begin{align}\label{b1}
  \E\left[(X_r)^k\right]&=
\E\Big[\Big( \sum_{C \in \cC(n,r)} X_C(S) \Big)^k\Big]
\notag\\&
= \sum_{[C_1, \dots, C_k] \in \cC(n,r)^k} \E[X_{C_1} X_{C_2} \cdots X_{C_k}]
.\end{align}

So we sum over all $({\binom{n}{r}}2^{n-r})^k$ members of $\cC(n,r)^k$.  
Since the product $X_{C_1}(S) X_{C_2}(S) \cdots X_{C_k}(S)=1$ if {\bf each} of $C_1,C_2, \dots, C_k$ is properly included in $S$ (i.e. if
each vertex  in $C_1 \cup C_2 \cup \dots C_k$ belongs to $S$), and $0$ otherwise, 
the contribution,
or weight,
due to each such term is
\begin{align}
\text{Weight}([C_1,C_2,\dots,C_k])=
\E\left[X_{C_1} X_{C_2} \cdots X_{C_k}\right]= \frac{1}{2^{|C_1 \cup C_2 \cup \dots C_k|}} .
\end{align}

\subsubsection{Data Structure}

Every $r$-dimensional subcube of $\{0,1\}^n$ has the form
\begin{align}
C=\{ (x_1, \dots, x_n) \in \{0,1\}^n \, | \,x_{i_1}=\alpha_{i_1}, \dots, x_{i_{n-r}}=\alpha_{i_{n-r}} \} ,
\end{align}
for some $1\leq i_1 < i_2 < \dots <i_{n-r} \leq n$ and $(\alpha_{i_1}, \dots \alpha_{i_{n-r}}) \in \{0,1\}^{n-r}$.
A good way to represent it on a computer is as a row-vector of length $n$, in the {\it alphabet} $\{0,1,*\}$,
where the entries corresponding to  $i_1, i_2, \dots, i_{n-r}$  have $\alpha_{i_1}, \dots. \alpha_{i_{n-r}}$ respectively
and the remaining $r$ entries are filled with {\bf wild cards}, denoted by $*$.

For example, if $n=7$ and $r=3$, the $3$-dimensional cube
\begin{align}
\{ (x_1, \dots, x_7) \in \{0,1\}^7 \, | \,x_2=1, x_4=1, x_5=0, x_7=1\} ,
\end{align}
is represented by
\begin{align}
*1*10*1 .
\end{align}

We are trying to find a weighted count of {\bf ordered} $k$-tuples of $r$-dimensional subcubes. The natural data structure for these is the set of $k$ by $n$ matrices in the
`alphabet' $\{0,1,*\}$ where every row has exactly $r$ `wildcards', *.

Let's call this set of matrices, that correspond to $\cC(n,r)^k$, $\cC(n,r,k)$.

For any {\bf specific}, numeric $n$, there are `only'  $(2^{n-r}{\binom{n}{r}})^k$ of these matrices, and for each and every one of them one can find
the cardinality of the union of the corresponding subcubes, let's call it $v$, and add to the running sum $\frac{1}{2^v}$. But we want to do it for
{\bf symbolic} $n$, i.e. for `all' $n$. 
We will soon see how, for each {\it specific} (numeric) $r$ and $k$ this can be done {\it in principle}, but only for relatively small $r$ and $k$ {\it in practice}.
But let's try and push it as far we can.
An interesting consequence of our algorithm is the precise degree in $n$ and $2^n$ of the expression for $\E[X_r^k](n)$.

\subsubsection{The Kernel}

A key object in our approach is the {\bf kernel}. 
Given a $k \times n$ matrix $M$ in the alphabet $\{0,1,*\}$ let's call a column {\bf active} if it contains
at least one `$*$'. Note that the matrix has exactly $k\cdot r$  `$*$'s, hence the number of {\bf active columns}, let's call it $a$,
is between $r$ and $k\cdot r$. \footnote{More generally, if we want to find an expression for the {\it mixed moment} $\E[X_{r_1} \cdots X_{r_k}]$
the number of active columns is between $\max(r_1, \dots, r_k)$ and $r_1 + \dots + r_k$.} The {\bf kernel} of $M$ is
the submatrix of $M$ consisting of its active columns.\\

Let $\cC_a(n,r,k)$ be the subset of $\cC(n,r,k)$ matrices with exactly $a$ active columns. We will call such a matrix in {\bf canonical form} if
the active columns are occupied by the $a$ leftmost columns (i.e. its kernel is contiguous starting in the first column).
Let's denote by $\overline{\cC}_a(n,r,k)$ the set of such matrices in canonical form.
Obviously, there are ${\binom{n}{a}}$ ways to choose which of the $n$ columns are active and hence
\begin{align}
\text{Weight}(\cC(n,r,k)) \, =& \, \sum_{a=r}^{rk} \text{Weight}(\cC_a(n,r,k)) \notag\\
=&\sum_{a=r}^{rk} {\binom{n}{a}} \, \text{Weight}(\overline{\cC}_a(n,r,k)) .
\end{align}
For any set, $S$, $\text{Weight}(S)$ is the sum of the weights of its members. Note that this has degree $rk$ in $n$.

It remains to do a {\it weighted-count}, where every matrix gets `credit' $1/2^v$, where $v$ is the cardinality of the union of the corresponding subcubes represented by the $k$ rows,
for the set $\overline{\cC}_a(n,r,k)$, of matrices in canonical form. Note that there are only {\bf finitely many} choices for the $a$ leftmost columns, i.e. the set of
$k \times a$ matrices in the alphabet $\{0,1,*\}$ with the property that every column has at least one `$*$', and every row has exactly $r$ `$*$'s.
These can be divided into {\it equivalence classes} obtained by permuting rows and columns and transposing $0$ and $1$ in any given column. Once these are sorted into equivalence classes, one needs only
examine one representative, and then multiply the weight by the cardinality of the class.

But what about the $n-a$ rightmost columns? There are $2^{k(n-a)}$ possible submatrices; the alphabet here is $\{0,1\}$.
Almost all of these have distinct rows, more precisely,
\begin{align}
{\binom{2^{n-a}}{k}} k!
\end{align}
of them, and these will produce the smallest possible weight in conjunction with any kernel. The other extreme is that all the rows of the submatrix consisting of the $n-a$ rightmost columns are identical, and then there are only $2^{n-a}$ choices to fill them in.

In general, every such member of $\overline{\cC}_a(n,r,k)$, determines a {\bf set partition} of the set of rows $\{1,\dots,k\}$, where two rows are {\it roomates} if they have the same last $n-a$ entries.
If that set-partition has $m$ members
$1 \leq m \leq k$, then the number of choices of assigning {\bf different} $0-1$ vectors of length $n-a$ to each of the parts of the set-partition is
\begin{align}
{\binom{2^{n-a}}{m}} m! .
\end{align}

Now for each $a$ and for each set-partition, we let the computer generate the {\bf finite} set of $k \times a$ matrices in the alphabet $\{0,1,*\}$. Each of the
members of the set partition has its own submatrix, and we ask our computer to kindly find the number of vertices in the corresponding union of
subcubes corresponding to each member of the examined set partition. Since they are disjoint, we add them up, getting $v$ for that particular pair (matrix, set-partition), giving credit $1/2^v$.

\subsubsection{Implementation}

All this is implemented in the Maple package {\tt SMCboole.txt}, available from:
\\
\href{https://sites.math.rutgers.edu/\~zeilberg/tokhniot/SMCboole.txt}{\tt https://sites.math.rutgers.edu/\~{}zeilberg/tokhniot/SMCboole.txt}

In particular `{\tt Moms(A,n);}', for any list of non-negative integers $A=[r_1, \dots r_k]$ gives you the mixed moment $\E[X_{r_1} \cdots X_{r_k}]$. For example, to get the
third moment of the number of edges (i.e. $1$-dimensional subcubes) type \quad
{\tt Moms([1,1,1],n);}  \quad, which very quickly returns:
\begin{align}
\frac{2^{n} n^{2} \left(\left(2^{n}\right)^{2} n +12 \,2^{n} n +6 \,2^{n}+24 n \right)}{512} .
\end{align}

To get the third moment of the number of squares (i.e. $2$-dimensional subcubes), type \quad {\tt Moms([2,2,2],n);} \quad,
getting
\begin{align}
\frac{2^{n} n  \left(n -1\right)}{2097152} \cdot\bigg(&\left( 2^{n}\right)^{2} n^{4}-2 \left(2^{n}\right)^{2} n^{3}
+48 \,2^{n} n^{4}+\left(2^{n}\right)^{2} n^{2} \notag\\
&+576 n^{4}+24 \,2^{n} n^{2}+384 n^{3}-72 \,2^{n} n \notag\\
&+1344 n^{2}-1024 n -2176\bigg) .
\end{align}

The third moment of the number of $3$-dimensional cubes takes a bit longer, and we were unable to compute the fourth moment of the number of $3$-dimensional cubes; it took too much time and too much space.

More informative for {\it statistical purposes} are the {\bf central moments}, $\E[(X_r-\mu_r(n))^k]$, that  Maple easily derives,
using {\it linearity of expectation} from the pure moments. The function call for this is
{\tt MOMrk(r,k,n);} where {\tt r} and {\tt k} are {\it numeric} but $n$ is a {\bf symbol} denoting the dimension of the ambient cube.

To get the explicit expression given above 
for the third through sixth moments for the number of edges, the third and fourth moments for the number of squares, and the third moment for the number of $3$-dimensional
subcube (all {\it about the mean}) we typed:

{\tt MOMrk(1,3,n);} ,  {\tt MOMrk(1,4,n);} ,  {\tt MOMrk(1,5,n);} ,  {\tt MOMrk(1,6,n);} , 

{\tt MOMrk(2,3,n);} ,  {\tt MOMrk(2,4,n);} ,  

{\tt MOMrk(3,3,n);} 

respectively. To our chagrin, `{\tt MOMrk(3,4,n);}'   took too long.

\subsubsection{Consequence of the algorithm}: The $k$-th  moment of $X_r$ is a bivariate polynomial in $(n,2^n)$ of degree $k\,r$ in $n$ and degree $k$ in $2^n$ .

This raises the {\it theoretical} possibility (in God's computer) of finding these expressions by {\bf pure brute force}. The generic polynomial in $(n,2^n)$ of degree $kr$ in $n$ and degree $k$ in $2^n$
has $(1+kr)(1+k)$ `degrees of freedom'. So using {\it undetermined coefficients} we need to generate a table of $\E[X_r^k](n)$ for $1 \leq n \leq (1+kr) \cdot (1+k)$. 
After gathering the data, we use linear algebra to solve a system of  $(1+kr) \cdot (1+k)$ equations with that many unknowns.
For each specific
$n=n_1$ there are `only' $2^{2^{n_1}}$ subsets, and for each of them we can ask how many $r$-dimensional subcubes do they contain, raise it to the $k$-th power and take the average.
Alas, $2^{2^5}$ is already big enough, so only God's computer, with practically infinite time and space,  can carry this brute force approach.

\section{Asymptotic Normality }\label{S:ANor}

As said in the introduction,
asymptotic normality of $X_{r}$ was proved by  Urszula Konieczna \cite{UK}. 
The random variable $X_r$ depends on $n$, and for emphasis, we denote it in
the sequel by $X_{n,r}$.
We may then state the result as follows,
where $N(0,1)$ denotes
the standard normal distribution.

\begin{theorem}[\cite{UK}]\label{T1}
For every fixed $r\ge0$, %the normalized random variables 
we have as \ntoo
\begin{align}\label{t1}
\frac{X_{n,r}-\mu_r(n)}{\sqrt{\Var\xpar{X_{n,r}}}}
\dto N(0,1)  
\end{align}
with convergence in distribution
and of all moments.  
\end{theorem}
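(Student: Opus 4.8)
The plan is to exploit the orthogonal (Walsh, or Hoeffding/Efron--Stein) decomposition of the centered variable $W_n := X_{n,r}-\mu_r(n)$ over the independent coin flips, and to show that its \emph{degree-one part already carries all of the variance}. This reduces the whole problem to the ordinary central limit theorem for a sum of i.i.d.\ bounded variables, and should give a genuinely short proof.

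Concretely, I would write $\xi_v := \mathbf{1}[v\in S]$ for $v\in\{0,1\}^n$; these are i.i.d.\ $\Be(1/2)$, and I set $\bar\xi_v := \xi_v-\tfrac12$, so $\E\bar\xi_v=0$ and $\Var\bar\xi_v=\tfrac14$. For an $r$-subcube $C$ one has $X_C=\prod_{v\in C}(\tfrac12+\bar\xi_v)$, so after centering
\[
\bar X_C := X_C-\E X_C=\sum_{\emptyset\neq A\subseteq C}2^{-(2^r-|A|)}\prod_{v\in A}\bar\xi_v .
\]
Summing over $C\in\cC(n,r)$ expresses $W_n=\sum_C\bar X_C$ as a sum of orthogonal Walsh components of degrees $1,\dots,2^r$, the total degree being the \emph{fixed} constant $2^r$. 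Since every vertex $v$ lies in exactly $\binom{n}{r}$ distinct $r$-subcubes, the degree-one component is
\[
L_n=2^{-(2^r-1)}\binom{n}{r}\,T_n,\qquad T_n:=\sum_{v\in\{0,1\}^n}\bar\xi_v,
\]
whence a direct computation gives $\Var(L_n)=2^{-2\cdot 2^r}\binom{n}{r}^2\,2^n$. The crucial step is to compare this with $\sigma_n^2:=\Var(X_{n,r})$: in the formula \eqref{thvar} the dominant ($i=0$) term is exactly $\binom{n}{r}\binom{n-r}{r}2^{-2^{r+1}}2^n\sim 2^{-2\cdot 2^r}\binom{n}{r}^2 2^n$, so that $\Var(L_n)/\sigma_n^2\to1$. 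By orthogonality of the Walsh components this forces $\Var(W_n-L_n)=\sigma_n^2-\Var(L_n)=o(\sigma_n^2)$.

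With the variance thus accounted for, the rest is routine. The classical CLT for the sum $T_n$ of $2^n$ i.i.d.\ bounded variables gives $L_n/\sigma_n\dto N(0,1)$; Chebyshev's inequality together with $\Var(W_n-L_n)=o(\sigma_n^2)$ gives $(W_n-L_n)/\sigma_n\pto 0$; and Slutsky's theorem yields $W_n/\sigma_n\dto N(0,1)$, which is \eqref{t1}. For convergence of \emph{all} moments I would invoke hypercontractivity (Bonami--Beckner): because $W_n$ is a function on the discrete cube of bounded Walsh-degree $2^r$, one has $\|W_n\|_q\le (q-1)^{2^{r-1}}\|W_n\|_2$ for every even $q$, so $\E\,|W_n/\sigma_n|^q$ is bounded uniformly in $n$. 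This uniform integrability of all powers upgrades convergence in distribution to convergence of all moments.

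I expect the heart of the matter to be the penultimate paragraph: establishing that the degree-one projection asymptotically captures the \emph{entire} variance, i.e.\ $\Var(L_n)\sim\sigma_n^2$. If several Walsh degrees contributed at the leading order $n^{2r}2^n$, the limit would not reduce to the CLT for $T_n$ and could fail to be Gaussian; the whole argument hinges on this single quantitative fact. The explicit formula \eqref{thvar} makes it transparent, but it can also be verified directly by noting that among pairs of subcubes only those sharing exactly one vertex contribute to the leading term of the covariance sum, which is precisely the content of $L_n$.
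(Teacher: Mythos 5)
Your argument is correct, but it follows a genuinely different route from the paper's. The paper applies an off-the-shelf normal-approximation theorem for sums of locally dependent variables (Theorem~\ref{T58}, from \cite{SJ58}), whose proof runs through bounds on joint cumulants via the dependency graph; all that then has to be checked are the orders of magnitude $N_n\le n^r2^n$, $M_n=O(n^r)$ and $\gss_n\ge c\,n^{2r}2^n$. You instead use the Walsh (Hoeffding) decomposition over the $2^n$ independent coin flips and show that the degree-one projection $L_n$ carries asymptotically all of the variance: your value $\Var(L_n)=2^{-2^{r+1}}\binom{n}{r}^2 2^n$ is indeed asymptotic to the $i=0$ term $\binom{n}{r}\binom{n-r}{r}2^{\,n-2^{r+1}}$ of \eqref{thvar}, so orthogonality of Walsh levels, the classical CLT for $T_n$, Chebyshev and Slutsky give \eqref{t1}, and Bonami--Beckner hypercontractivity (legitimate here since $W_n$ is a multilinear polynomial of fixed degree $2^r$ in the symmetric variables $2\bar\xi_v$) supplies the uniform integrability that upgrades this to convergence of all moments. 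Each approach buys something: the paper's is shorter on the page because the cumulant estimates are delegated to the cited theorem, and it applies verbatim to any family with a sparse dependency graph; yours is self-contained modulo classical tools, explains \emph{why} the limit is Gaussian (the linear part of the statistic dominates --- a H\'ajek-projection phenomenon), and identifies exactly which pairs of subcubes (those meeting in a single vertex) produce the leading variance term. Two small points to make explicit in a write-up: the identity $\Var(W_n)=\Var(L_n)+\Var(W_n-L_n)$ rests on the orthogonality of distinct Walsh levels and deserves a sentence; and you should record that, unlike the dependency-graph route, the hypercontractivity step uses $p=\tfrac12$ in an essential quantitative way, so the extension to general $p(n)$ mentioned in the paper's remark would require the biased-cube version of the inequality.
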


As already noted at the very beginning of this paper, the case $r=0$ is
well known, since then the variables $X_C$ are 
independent and this becomes an instance of the central limit theorem for
binomial variables, known since a pamphlet by de Moivre in 1733.
(This case can be used as a sanity check in the arguments below.)

The proof in \cite{UK} is based on the method of moments.
We will in this section survey how a variation of this method, based on
{\bf cumulants} and {\bf dependency graphs} leads to a quick proof with
almost no calculations.

\begin{remark}
Urszula Konieczna \cite{UK} 
considered a more general case where each subcube appears with
probability $p\in(0,1)$, where $p=p(n)$ may depend on $n$, and showed
asymptotic normality for a large range of $p(n)$.
A special case, not including the case $p=\frac12$ treated here, was earlier
shown by Karl Weber \cite{KW}.
This extension can also be treated by the method below, but for simplicity
we continue to consider only the case $p=\frac12$ discussed in \refS{S:Expl}.
\end{remark}

The aim is thus to show
that the leading terms in the expressions for the
central moments 
$\E[(X_{n,r}-\mu_r(n))^k]$ are what we would expect from a normal distribution, or,
more formally, that the scaled central moments
\begin{align}\label{a1}
\E\lrsqpar{\lrpar{\frac{X_{n,r}-\mu_r(n)}{\sqrt{\Var\xpar{X_{n,r}}}}}^k}
\end{align}
converge as \ntoo{}
to the corresponding moment $\E [Z^k]$ of a standard normal random
variable $Z$, for every fixed $r\ge0$ and $k\ge1$.
Then, the random variables $X_{n,r}$ are asymptotically normal by the method of
moments.

We thus want to find
the leading term of the moment \eqref{a1}. In \refS{S:Expl}, we computed the
central moments from the pure moments. This involves massive cancellation of
high-order terms, and is less suitable for a (human) proof for general $k$.

As a first step, we may modify \eqref{b1} and instead expand the central
moment as
\begin{align}\label{b1b}
  \E\bigsqpar{(X_{n,r}-\mu_r(n))^k}&
%=\E\left[\left( \sum_{C \in \cC(n,r)} X_C(S) \right)^k\right]
%\notag\\&
= \sum_{[C_1, \dots, C_k] \in \cC(n,r)^k} \E\bigsqpar{(X_{C_1}-\mu) \cdots (X_{C_k}-\mu)}
,\end{align}
where $\mu=\E[X_C]$ is given by \eqref{mu}.
Note that two variables $X_{C_1}$ and $X_{C_2}$ are independent if
$C_1\cap C_2=\emptyset$. In particular, for $k=2$, all such terms in the sum
in \eqref{b1b} vanish; this leaves only 
$\Theta(n^{2r}2^n)$ terms of the $\Theta\bigpar{n^{2r}2^{2n}}$ terms in the
full sum, or in \eqref{b1}, which is reflected in the leading terms in
\eqref{th2} and \eqref{thvar}.

For higher moments, we can obtain a further reduction by considering 
{\bf cumulants} (also called {\bf semiinvariants}) instead of moments.
We describe this briefly, referring to e.g.\
\cite{LeonovShiryaev} or \cite[pp.~145--149]{JLR} for details.
In general, if $Y$ is a  random variable with moments $m_k:=\E [Y^k]$ and
moment generating function $\sum_{k=0}^\infty m_k t^k/k!$, then the cumulants
$\kk_k=\kk_k(Y)$ are the coefficients of the generating function
\begin{align}\label{kk}
  \sum_{k=1}^\infty\kk_k \frac{t^k}{k!}
:=
\log \lrpar{  \sum_{k=0}^\infty m_k \frac{t^k}{k!}}.
\end{align}
(The generating functions can be regarded as  formal power series.)
Moreover, this generalizes to mixed cumulants $\kk_k(Y_1,\dots,Y_k)$ of
several random variables.
This means that there are algebraic relations: each cumulant
$\kk_k$ is a polynomial
in the moments $m_\ell$ of order $\ell\le k$, and conversely, and this
generalizes to mixed cumulants and moments.
For example, $\kk_1=m_1=\E Y$, and $\kk_2=m_2-m_1^2=\Var(Y)$, and the mixed
cumulant $\kk(Y_1,Y_2)=\E[Y_1Y_2]-\E[Y_1]\E[Y_2]$, the covariance of $Y_1$
and $Y_2$.

A convenient property of cumulants is that a normal distribution
$N(\mu,\gss)$ has moment generating function 
$\exp\bigpar{\mu t + \tfrac{\gss}2t^2}$, and thus by \eqref{kk} cumulants
$\kk_k=0$ for $k\ge3$. Consequently, to show that a normalized sequence of
random variables is asymptotically normal, it suffice to show that each
cumulant $\kk_k$ with $k\ge3$ converges to 0.

The mixed cumulants are multilinear, and thus we have an analog of
\eqref{b1} and \eqref{b1b}:
\begin{align}\label{kkk}
\kk_k\bigpar{X_{n,r}}&
=\kk\bigpar{X_{n,r},\dots,X_{n,r}}&
= \sum_{[C_1, \dots, C_k] \in \cC(n,r)^k} \kk\bigpar{X_{C_1}, \dots, X_{C_k}}
.\end{align}
We can here reduce the number of terms in the sum by a general
property of cumulants: If there is a partition $\set{1,\dots,k}=I\cup J$
into two non-empty sets $I$ and $J$ such the two families of random
variables
$(Y_i\mid i\in I)$ and $(Y_j\mid j\in J)$ are independent of each other,
then the mixed cumulant $\kk(Y_1,\dots,Y_k)=0$.
(Informally, this means that the $k$th cumulant $\kk_k(X_{n,r})$ can be seen
as some kind of measure of $k$th order dependencies in the sum \eqref{xr},
with lower order dependencies removed.)

As noticed above, each of our variables $X_C$ is independent of most of the
others. We need to keep track of not just pairwise independence, but also
independence between families of such variables; it is then convenient to
use the notion of {\bf dependency graphs}.

\begin{definition}
  Let $(Y_\ga)_{\ga\in\cA}$ be a family of random variables, with some
  arbitrary index set $\cA$.
A  {\bf dependency graph} for this family is a graph $\gG$ with vertex set
$\cA$, such that if $I$ and $J$ are two disjoint subsets of $\cA$ such that
there is no edge in $\gG$ with one endpoint in $I$ and the other in $J$,
then the 
two families $(Y_\ga\mid \ga\in I)$ and $(Y_\gb\mid \gb\in J)$ 
are independent of each other.
\end{definition}

\begin{lemma}\label{LgG}
  Let $\gG(n,r)$ be the graph with vertex set $\cC(n,r)$ and an edge between
  two distinct cubes $C,C'\in\cC(n,r)$ iff\/ $C\cap C'\neq\emptyset$.
Then $\gG(n,r)$ is a dependency graph for the family $\xpar{X_C\mid C\in\cC(n,r)}$.
\end{lemma}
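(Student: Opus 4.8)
The plan is to reduce the statement to the defining property of the randomness: since $S$ is a uniformly random subset of $\{0,1\}^n$, each vertex $v\in\{0,1\}^n$ lies in $S$ independently with probability $\tfrac12$, so the family of vertex-indicators $\bigpar{\ett{v\in S}}_{v\in\{0,1\}^n}$ is mutually independent. The entire proof rests on the observation that $X_C$ is a deterministic function of only those indicators attached to vertices of $C$; indeed $X_C=\prod_{v\in C}\ett{v\in S}$. Thus $X_C$ is measurable with respect to the $\sigma$-algebra generated by $\set{\ett{v\in S}\mid v\in C}$.

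First I would fix disjoint subsets $I,J\subseteq\cC(n,r)$ admitting no edge of $\gG(n,r)$ between them, and unwind what this means. Since $I$ and $J$ are disjoint, each pair $C\in I$, $C'\in J$ consists of distinct cubes, so ``no edge joining $I$ and $J$'' says precisely that $C\cap C'=\emptyset$ for all $C\in I$ and all $C'\in J$. Setting $V_I\=\bigcup_{C\in I}C$ and $V_J\=\bigcup_{C'\in J}C'$, this yields $V_I\cap V_J=\emptyset$: a common vertex would lie in some $C\in I$ and some $C'\in J$, forcing $C\cap C'\neq\emptyset$.

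Then I would conclude by grouping. The family $\bigpar{X_C\mid C\in I}$ is a function of the indicators $\bigpar{\ett{v\in S}\mid v\in V_I}$, while $\bigpar{X_{C'}\mid C'\in J}$ is a function of $\bigpar{\ett{v\in S}\mid v\in V_J}$. Since $V_I\cap V_J=\emptyset$ and the vertex-indicators are mutually independent, the two sub-collections are independent, and hence so are any (vector-valued) functions of them. This is exactly the independence required by the definition of a dependency graph, so $\gG(n,r)$ is one.

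There is no genuine obstacle here; the content is entirely bookkeeping. The one point that needs care is that the definition of a dependency graph demands independence of the entire families $\bigpar{X_C\mid C\in I}$ and $\bigpar{X_{C'}\mid C'\in J}$, not merely pairwise independence of individual variables across $I$ and $J$. The vertex-support argument handles this in one stroke: because independence is established at the level of the disjoint vertex sets $V_I,V_J$ rather than cube-by-cube, passing to functions of these two independent blocks of indicators yields independence of the full families simultaneously.
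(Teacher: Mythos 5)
Your proof is correct and follows essentially the same route as the paper's: both reduce the no-edge condition to disjointness of the vertex sets $V_I$ and $V_J$, and then invoke independence of the restrictions of the random set $S$ (equivalently, of the vertex indicators) to disjoint sets, noting that each family is a function of its own block. The only cosmetic difference is that the paper phrases the independence in terms of the random sets $S\cap V_I$ and $S\cap V_J$ rather than the individual indicators $\ett{v\in S}$.
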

\begin{proof}
Let $I$ and $J$ be disjoint subsets of $\cC(n,r)$ with no edge between $I$
and $J$.
Then $V_I:=\bigcup_{C\in I}C$  and $V_J:=\bigcup_{C\in J}C$ are two disjoint
subsets of $\setoi^n$. Thus the two random sets $S\cap V_I$ and $S\cap V_J$
are independent. Since the random variables $\xpar{X_C\mid C\in I}$ only
depend on $S\cap V_I$, and similarly for $J$, 
it follows that the families
$\xpar{X_C\mid C\in I}$ and $\xpar{X_C\mid C\in J}$ are independent.
\end{proof}

We can now use the dependency graphs $\gG(n,r)$ and the machinery above to
estimate cumulants of $X_{n,r}$ and show that
$\kk_k\bigpar{X_{n,r}}=o\bigpar{(\Var X_{n,r})^{k/2}}$ as \ntoo{} for every
fixed $r\ge0$ and $k\ge3$. However, we do not have to do these calculations,
since they already have been done under general hypotheses, leading to
convergence theorems that are easy to apply in situations like ours where
there is a rather sparse dependency graph.
We use here the following theorem, 
taken (with minor changes in notation) from %\cite{SJ58}
\cite[Theorem 2, with moment convergence by its proof and Theorem 1]{SJ58}.
See \eg{} \cite[Theorems 6.18 and 6.20]{JLR} for some related theorems 
(proved by the same cumulant method) that also can be used to show \refT{T1},
and \cite[Theorem~6.33]{JLR} for another related theorem (proved by a
different method) that also yields the asymptotic normality in \refT{T1}
(but not immediately moment convergence).

\begin{theorem}[\cite{SJ58}]\label{T58}
  Suppose that, for each $n$,
$\xpar{Y_{n,\ga}\mid \ga\in\cA_n}$ is a family of bounded random variables:
$|Y_{n,\ga}|\le A_n$. Let $N_n:=|\cA_n|$.
Suppose further that $\gG_n$ is a dependency graph for this family, and
let $M_n$ be the maximal degree of $\gG_n$ (assuming this is not $0$, in which
case we let $M_n:=1$). Let $\SW_n=\sum_{i\in\cA_i} Y_{n,i}$ and
$\gss_n:=\Var(\SW_n)$.
If there exists a positive integer $m$ such that
\begin{align}
  \label{t58a}
(N_n/M_n)^{1/m} M_n A_n/\gs_n \to0
\qquad\text{as \ntoo},
\end{align}
then
\begin{align}
\bigpar{\SW_n-\E \SW_n}/\gs_n\dto N(0,1)
  \qquad\text{as \ntoo}
,\end{align}
with convergence in distribution and with all moments.
\end{theorem}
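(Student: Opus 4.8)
The plan is to establish asymptotic normality by the \textbf{method of cumulants}, using the basic fact that the standard normal law $N(0,1)$ is the unique distribution whose cumulants satisfy $\kk_1=0$, $\kk_2=1$, and $\kk_j=0$ for all $j\ge3$. Set $\tilde\SW_n:=(\SW_n-\E\SW_n)/\gs_n$. Since cumulants of order $\ge2$ are translation invariant and homogeneous, $\kk_2(\tilde\SW_n)=1$ and $\kk_j(\tilde\SW_n)=\kk_j(\SW_n)/\gs_n^{\,j}$ for $j\ge3$, while $\kk_1(\tilde\SW_n)=0$. I would first reduce the theorem to proving
\begin{align}\label{pl1}
\kk_j(\tilde\SW_n)=\kk_j(\SW_n)/\gs_n^{\,j}\to0
\qquad\text{as \ntoo, for each fixed }j\ge3 .
\end{align}
Indeed, each moment $\E[\tilde\SW_n^{\,p}]$ is a fixed universal polynomial in $\kk_1(\tilde\SW_n),\dots,\kk_p(\tilde\SW_n)$ obtained by summing $\prod_{B}\kk_{|B|}$ over all set partitions of $\{1,\dots,p\}$. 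In the limit the blocks of size $1$ drop out (as $\kk_1=0$) and, by \eqref{pl1}, so do all partitions having a block of size $\ge3$; the surviving terms are the partitions into blocks of size exactly $2$, whose number is precisely $\E[Z^p]$ for a standard normal $Z$. Thus \eqref{pl1} yields convergence of all moments to the normal ones, and convergence in distribution then follows because $N(0,1)$ is determined by its moments.

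The heart of the matter is to estimate $\kk_j(\SW_n)$. By multilinearity of mixed cumulants,
\begin{align}\label{pl2}
\kk_j(\SW_n)=\sum_{(\ga_1,\dots,\ga_j)\in\cA_n^{\,j}}\kk\bigpar{Y_{n,\ga_1},\dots,Y_{n,\ga_j}} ,
\end{align}
and by the dependency-graph vanishing property a term is zero unless $\set{\ga_1,\dots,\ga_j}$ spans a \emph{connected} subgraph of $\gG_n$ (otherwise the indices split into two groups joined by no edge, hence independent, forcing the mixed cumulant to vanish). I would then combine two elementary bounds: (i) because $|Y_{n,\ga}|\le A_n$, every mixed cumulant of order $j$ has absolute value at most $C_jA_n^{\,j}$; and (ii) in a graph on $N_n$ vertices of maximal degree $M_n$ the number of $j$-tuples with connected support is at most $C_jN_nM_n^{\,j-1}$. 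Together \eqref{pl2} gives the key estimate
\begin{align}\label{pl3}
\bigabs{\kk_j(\SW_n)}\le C_j\,N_nM_n^{\,j-1}A_n^{\,j} .
\end{align}

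Dividing \eqref{pl3} by $\gs_n^{\,j}$ and writing $\gd_n:=(N_n/M_n)^{1/m}M_nA_n/\gs_n$ (so that the hypothesis \eqref{t58a} reads $\gd_n\to0$ and, since $N_n/M_n\ge1$, already gives $M_nA_n/\gs_n\le\gd_n\to0$), I substitute $M_nA_n/\gs_n=\gd_n(N_n/M_n)^{-1/m}$ to obtain
\begin{align}\label{pl4}
\frac{\bigabs{\kk_j(\SW_n)}}{\gs_n^{\,j}}
\le C_j\,\frac{N_n}{M_n}\Bigpar{\frac{M_nA_n}{\gs_n}}^{\!j}
=C_j\,\gd_n^{\,j}\Bigpar{\frac{N_n}{M_n}}^{\!1-j/m} .
\end{align}
For every order $j\ge m$ the exponent $1-j/m$ is $\le0$ and $N_n/M_n\ge1$, so the right-hand side of \eqref{pl4} is $\le C_j\gd_n^{\,j}\to0$; this settles \eqref{pl1} for all sufficiently high cumulants in one stroke, and already explains why the theorem should hold.

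The main obstacle is the finitely many low orders $3\le j<m$, for which \eqref{pl4} is \emph{inconclusive}: there $1-j/m>0$, and the factor $(N_n/M_n)^{1-j/m}$ may diverge. For these orders the crude term-by-term bound (i) is too lossy and must be replaced by a sharper estimate that reuses the variance, controlling a connected mixed cumulant (or a cluster of them) by second-order quantities times the local scale $M_nA_n$, so that $\bigabs{\kk_j(\SW_n)}/\gs_n^{\,j}$ is dominated by a positive power of $M_nA_n/\gs_n\to0$ rather than by the right-hand side of \eqref{pl4}. Making this variance-reusing bound compatible with the combinatorics of connected clusters, uniformly across the low orders, is where the real work lies; the role of the auxiliary integer $m$ in \eqref{t58a} is exactly to package the easy high-order regime together with this delicate low-order regime into a single verifiable condition.
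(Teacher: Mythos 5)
Your strategy is the right one, and in fact it is the strategy behind the cited result: the key estimate $\bigabs{\kk_j(\SW_n)}\le C_j\,N_nM_n^{j-1}A_n^j$, obtained from multilinearity, the vanishing of mixed cumulants over index sets that disconnect in the dependency graph, and the count of connected $j$-tuples, is precisely Theorem~1 of \cite{SJ58}; your reduction of ``asymptotic normality with all moments'' to the vanishing of the normalized cumulants of order $\ge3$ is the standard cumulant form of the method of moments, and both steps are carried out correctly. Bear in mind, though, that the paper itself offers no proof of Theorem~\ref{T58}: it is imported verbatim from \cite{SJ58}, so the only proof to compare against is the one in that reference.

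As a proof of the theorem as stated, for an arbitrary positive integer $m$, your argument has a genuine gap --- the one you yourself flag. For $3\le j<m$ the bound
$C_j(N_n/M_n)(M_nA_n/\gs_n)^j=C_j\gd_n^{\,j}(N_n/M_n)^{1-j/m}$
need not be $o(1)$: writing $t_n:=M_nA_n/\gs_n$, if for instance $N_n/M_n\sim t_n^{-(m-1/2)}$ with $t_n\to0$, then $\gd_n=t_n^{1/(2m)}\to0$ while $(N_n/M_n)t_n^{3}=t_n^{7/2-m}\to\infty$ for every $m\ge4$. So the low-order cumulants are not controlled by the ingredients you have assembled, and the ``sharper, variance-reusing'' estimate you invoke to rescue them is never produced; supplying it is exactly the content of the proof of \cite[Theorem~2]{SJ58} beyond the cumulant bound, and it is not a routine fix. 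What you have therefore proves the theorem only when $m\le3$, in which case $(N_n/M_n)(M_nA_n/\gs_n)^j\le\gd_n^{\,j}\to0$ for every $j\ge3$ and your argument is complete. Happily, that is the only instance this paper uses: in the proof of Theorem~\ref{T1} the authors take $m=3$, noting that this ``is the standard case, which corresponds to the method of moments without extra arguments.'' In short: correct method, complete for the case the paper actually needs, but not yet a proof of the general statement.
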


\begin{proof}[Proof of \refT{T1}]
Fix $r\ge0$. We let $c_1, c_2,c_3$ denote 
unimportant positive constants that may
depend on $r$, but not on $n$.

We use \refT{T58}, with 
$Y_{n,C}:=X_C$ for
$C\in\cA_n:=\cC(n,r)$, so $\SW_n=X_{n,r}$.
We use \refL{LgG} and take the dependency graph $\gG_n=\gG(n,r)$.
Then
\begin{align}\label{swN}
  N_n=|\cC(n,r)|=\binom{n}{r}2^{n-r} \le n^r 2^n,
\end{align}
and, by \eqref{thvar} (where $i=0$ yields the leading term),
\begin{align}
  \gss_n:=\Var(X_{n,r})\ge \cc n^{2r} 2^n.
\end{align}
Furthermore, we may take 
\begin{align}
  A_n:=1.
\end{align}
Moreover, from the definition of the graph $\gG(n,r)$ follows that it is
regular, with degree
\begin{align}\label{swM}
  M_n \le 2^r\binom{n}{r}\le\cc n^r.
\end{align}
We choose $m=3$ in \refT{T58}. (This is the standard case, which corresponds
to the method of moment without extra arguments, see the proof in
\cite{SJ58}.)
Then \eqref{swN}--\eqref{swM} yield
\begin{align}
(N_n/M_n)^{1/m} M_n A_n/\gs_n 
=N_n^{1/3}M_n^{2/3}A_n/\gs_n
\le \cc\frac{ n^r 2^{n/3}}{n^r 2^{n/2}}
\to0
\end{align}
as \ntoo, so \eqref{t58a} holds. Consequently, \refT{T58} applies and shows
\refT{T1}.
\end{proof}

\section{Conclusion }\label{S:Conc}

In the first part, we demonstrated  the power of computers to {\it automatically} derive complicated explicit expressions, all polynomials in $n$ and $2^n$, for low moments of the
random variable {\it number of fixed-size implicants} of a Boolean function of $n$ variable, or equivalently, the number of fixed-dimensional subcubes of
a random subset of the $n$-dimensional unit cube. These implied that the scaled first few moments for low-dimensional subcubes
tend, as $n$ goes to infinity, to those of the Normal Distribution, suggesting, but by no means proving, that for each fixed dimension, this random variable 
is {\it asymptotically normal}.

In the second part, we demonstrated the power of {\it human ingenuity}, and
the traditional lore of probability and statistics, by presenting a simple proof
of asymptotic normality, first proved in \cite{UK}.

\def\nobibitem#1\par{}

\end{document}